\documentclass[11pt]{article}
\usepackage[left=1in, right=1in, top=1in, bottom=1in]{geometry}

\usepackage{mathtools}
\usepackage{amsmath}
\usepackage{amsthm}
\usepackage{newtxtext}
\usepackage[amsthm]{newtxmath}  % replaces amsfonts, amssymb, txfonts, bbm
\usepackage{dsfont}              % \mathds{1} for indicator functions

\usepackage{xcolor}
\usepackage{geometry}
\usepackage{ulem}
\usepackage{booktabs}
\usepackage{tabularx}
\usepackage{wrapfig}
\usepackage{multirow}
\usepackage{comment}
\usepackage{authblk}
\usepackage{url}
\usepackage{hyperref}

\usepackage{algpseudocode}
\algrenewcommand\algorithmicrequire{\textbf{Input:}}
\algrenewcommand\algorithmicensure{\textbf{Output:}}

\usepackage{tikz}
\usetikzlibrary{matrix,decorations.pathreplacing,shapes,arrows.meta}

\newtheorem{lemma}{Lemma}
\counterwithin{lemma}{section}
\newtheorem{theorem}{Theorem}
\counterwithin{theorem}{section}

\counterwithin{claim}{section}
\theoremstyle{definition}

\counterwithin{definition}{section}

\counterwithin{assumption}{section}

\renewcommand{\tilde}{\widetilde}

\newcommand{\R}{\mathbb{R}}

\newcommand{\st}{\;\;\text{s.t.}\;}

\newcommand{\sfm}{\textrm{{\sc Sfm}}}
\newcommand{\eo}{\textrm{{\sc EO}}}

\providecommand{\keywords}[1]{%
  \small\textbf{\textit{Keywords}} #1%
}
\providecommand{\keywords}[1]
{
  \small	
  \textbf{\textit{Keywords}} submodular optimization, parametric, discrete Newton's method, cutting planes
}

\title{Faster Parametric Submodular Function Minimization by Exploiting Duality}

\author[1]{Swati Gupta}
\author[2]{Alec Zhu\thanks{A part of this work was done when Alec Zhu was an undergraduate student at Massachusetts Institute of Technology.}}
\affil[1]{Massachusetts Institute of Technology, \texttt{swatig@mit.edu}}
\affil[2]{Princeton University, \texttt{az4754@princeton.edu}}
\begin{document}
\maketitle
\vspace{-1cm}

\begin{abstract}
Let $f:2^{E} \rightarrow \mathbb{Z}_+$ be a submodular function on a ground set $E = [n]$, and let $P(f)$ denote its extended polymatroid. Given a direction $d \in \mathbb{Z}^n$ with at least one positive entry, the line search problem is to find the largest scalar $\lambda$ such that $\lambda d \in P(f)$. The best known strongly polynomial-time algorithm for this problem is based on the discrete Newton's method and requires $\widetilde{O}(n^2 \log n)\cdot \sfm$ time, where \sfm\ is the time for exact submodular function minimization under the value oracle model.

In this work, we study the first weakly polynomial time algorithms for this problem. We reduce the number of calls to the exact submodular minimization oracle by exploiting a dual formulation of the parametric line search problem and recent advances in cutting plane methods. We obtain a running time of
  \[
    O\bigl(n^2 \log(nM\|d\|_1)\cdot \eo + n^3 \log(nM\|d\|_1)\bigr) + O(1)\cdot \sfm,
  \]
  where $M = \|f\|_\infty$ and \eo\ is the cost of evaluating $f$ at a set. Note that when $\log \|d\|_1 = O(\log (nM))$, this simplifies to
  \[
    O\bigl(n^2 \log(nM)\cdot \eo + n^3 \log^{O(1)}(nM)\bigr) + O(1)\cdot \sfm,
  \]
  which matches the current best weakly polynomial running time for submodular function minimization~\cite{lsw}, and therefore, one cannot hope to improve this running time. Our approach proceeds by deriving a dual formulation that minimizes the Lovász extension $F$ over a hyperplane intersected with the unit hypercube, and then solving this dual problem approximately via cutting plane methods, after which we round to the exact intersection using the integrality of $f$ and $d$.

\end{abstract}
\keywords{Submodular optimization; Discrete Newton's Method; Cutting Plane Algorithms}

\section{Introduction}
Let $f:2^{E} \to \mathbb{Z}$ be submodular function on a ground set $E = [n]$. For a vector $x\in\mathbb{R}^n$ and $S \subseteq E$, write $x(S) \coloneqq \sum_{i\in S} x_i$. The extended polymatroid and base polytope associated with $f$ are
\begin{align*}
    P(f) &\coloneqq \bigl\{ x\in \mathbb{R}^n : x(S)\leq f(S) \; \forall S\subseteq E \bigr\},\\
    B(f) &\coloneqq \bigl\{ x\in P(f) : x(E)= f(E)\bigr\}.
\end{align*}

Given a point $x_0 \in P(f)$ and an integral direction $d\in \mathbb{Z}^n$ with at least one positive entry (otherwise the movement along $d$ is unbounded), our goal is to find the maximum step length $\lambda^\star$ for which moving from $x_0$ in direction $d$ remains feasible:
\begin{equation}\label{eq:linesearch-general}
  \lambda^\star \;=\;
  \max\bigl\{\lambda \in \mathbb{R}_+ : x_0 + \lambda d \in P(f)\bigr\}.
\end{equation}
Expanding the definition of $P(f)$ and replacing $f^\prime = f-x_0$, this is equivalent to the parametric submodular minimization problem
\begin{equation}\label{eq:parametric}
  \lambda^\star \;=\;
  \max\bigl\{\lambda \in \mathbb{R}_+ : 
  \min_{S \subseteq E}\bigl (f^\prime(S) - \lambda d(S)\bigr) \geq 0 \bigr\},
\end{equation}
where $d(S) \coloneqq \sum_{i\in S} d_i$, and $f^\prime \geq 0$ since $x_0 \in P(f)$. Henceforth, we will assume that $x_0 = 0$ and the submodular function $f$ is non-negative. 

Such parametric problems arise in several algorithmic applications, including line search variants of the Frank-Wolfe method {(e.g., \cite{pedregosa2020linearly, LacosteJulienJaggi2015})}, algorithmic versions of Carath\'eodory's theorem \cite{GrotschelLovaszSchrijver1981}, and {size-constrained densest subgraph problems  \cite{nagano2011size}. In the paper, we will solve the line search problem of the form \eqref{eq:parametric}, and assume the submodular function is integral, with magnitude $M = \|f^\prime\|_{\infty}$. We will further assume that the direction $d \in \mathbb{Z}^n$.} 

\subsection{Related Work} For non-negative directions {$d \geq 0$}, one can solve line search using at most $O(n)$ submodular function minimizations. This is equivalent to a single parametric submodular function minimization with an additional property of inducing ``strong maps"\footnote{A submodular function $\hat{g}$ is said to be a strong quotient of $g$ if $Z \supseteq Y \subseteq E$ implies $g(Z) - g(Y) \geq \hat{g}(Z)- \hat{g}(Y)$. This relation is denoted as $g \rightarrow \hat{g}$, and is referred to as a strong map.}, which allows this problem to be solved in the time of a {\it single} submodular function minimization using combinatorial algorithms {\cite{fleischer2003push,Hochbaum1998_PseudoflowParametric}}. These algorithms exploit the superset structure of minimizers of $f(S) - \lambda d(S)$ as $\lambda$ increases. This structure (or the property of strong maps) breaks down for general directions.

For general directions {$d \in \mathbb{R}^n$}, the first strongly polynomial time algorithm for submodular polytope line search was by Nagano \cite{NAGANO2007349}, where they used Megiddo's parametric search framework; but this algorithm has the prohibitively expensive runtime of $\tilde{O}(n^8)$ \sfm\  calls. The current best strongly polynomial time algorithm analyzes the iterations of the discrete Newton algorithm over the concave envelope $g(\lambda) = \min_{S \subseteq E} f(S) - x_0(S) - \lambda d(S)$. Note that $g(\lambda)$ is piecewise linear, but it is not clear if this envelop has polynomial number of breakpoints. Nevertheless, Goemans et al. \cite{discretenewton} showed that the discrete {Newton's method} will converge in $O(n^2 \log n \cdot \sfm)$ time, by critically analyzing the growth of submodular functions over certain sequences of subsets. These \sfm\ calls need to compute the maximal minimizers for specific breakpoints of $\lambda$. It is unknown if this running time can be improved, {and unclear if more aggressive versions of this method (that use more than 1 \sfm\  in each iteration) will converge faster\footnote{We can construct examples where the submodular grows by a factor of $8^n$ over a sequence of sets $\{S_i\}$ where aggressively changing Newton iterations to $\delta_i^\prime = f(S_i)/(2^n g(S_i))$ instead of $\delta_i = f(S_i)/g(S_i)$ results in $\Omega(n^2)$ iterations over the ${S_i}$ (but we do not have example over the entire ground set $E$).}}. %In fact, we show in Section \ref{} that there exist submodular functions. 

Given upper and lower bounds on $\lambda$, for e.g., $\ell = 0$, and $u = \min_{e| d(e)>0}f(e)/d(e)$, one can simply perform a binary search on $\lambda \in [\ell, u]$ and check for feasibility in each iteration. Though this is folklore, the method was first formalized by Bai et. al \cite{bai2016algorithms}, who showed that using $\log(u/\epsilon)$ calls to \sfm\ (over the function $f - \ell d$) one can find a $(1+\epsilon)$-approximation for maximizing $\lambda$. {Note that this requires an \sfm\ call in each iteration, for this approximate method to get close to the intersection. Our work significantly improves over this baseline, finding the exact intersection in weakly polynomial time.} 

\subsection{Contribution} In this work, we reduce the weakly polynomial running time of line search to $O(n^2 \log (nM\|d\|_1) \cdot \eo + n^3 \log (nM\|d\|_1) + {O(1) \cdot \sfm}$, where \eo\ is the running time for the function evaluation oracle. When $\log(\|d\|_1) \in O(\log(nM))$, then the running time reduces to $O(n^{2} \log(nM) \cdot \mathrm{EO} + n^{3} \log^{O(1)}(nM) + \sfm)$, which is the current best weakly polynomial running time for \sfm\  \cite{lsw}. We first lift the problem to a higher-dimensional base polytope, and then exploit duality to show an equivalent formulation that minimizes the Lov\'{a}sz extension of the submodular function, over a hyperplane intersected with the $[0,1]$ hypercube. This can be approximately solved using the cutting plane machinery, which can further be rounded to the exact intersection, using the integrality of the submodular function and the direction $d$. Instead of using $\tilde{O}(n^2)$ \sfm \ calls in the strongly polynomial method of {\cite{discretenewton}}, we show that one can obtain the exact intersection using a single call to \sfm, and the currently best weakly polynomial time for \sfm\ \cite{lsw}. We summarize the related work and our key result in Table \ref{tab:line-search}. 

\begin{table}[!t]
\centering
\footnotesize
\caption{Algorithms for parametric submodular line search for non-negative directions $d \geq 0$ and general directions $d \in \mathbb{Z}^n$. Here, $L=\log(nM\|d\|_1)$, and \sfm* is the running time of any combinatorial submodular function minimization algorithm.}
\label{tab:line-search}
\begin{tabularx}{\textwidth}{|X|X|c|l|}
\hline
\textbf{Reference} & \textbf{Runtime} & \textbf{Running Time} & \textbf{Assumptions} \\
\hline
Fleischer \& Iwata \cite{fleischer2003push}, Hochbaum \cite{Hochbaum1998_PseudoflowParametric} & $O(1) \cdot \sfm*$ or $O(n) \cdot \sfm$ & Strongly Poly & $d \geq 0$ \\
\hline
Nagano \cite{NAGANO2007349} & $\widetilde{O}(n^8)\cdot \sfm$ & Strongly Poly & General $d$ \\
\hline
Goemans, Gupta, Jaillet \cite{discretenewton} & $\widetilde{O}(n^2) \cdot \sfm$ & Strongly Poly & General $d$ \\
\hline
Bai, Iyer, Wei, Bilmes ~\cite{bai2016algorithms} & $O(\log (u/\epsilon)) \cdot \sfm$ & Weakly Poly & $(1+\varepsilon)$-approx \\
\hline
\textbf{This work} & $\widetilde{O}(n^2 L \cdot \eo + n^3 L) +  O(1) \cdot \sfm$ & Weakly Poly & General $d$ \\
\hline
\end{tabularx}
\end{table}

The paper is organized as follows. We first solve the toy problem of line search over $B(F)$ to illustrate duality in Section \ref{sec:toyproblem}. We then solve the line search problem in Section \ref{sec:solvelinesearch}. We show that the solutions to the line search problem lie on a discrete ladder in Section \ref{sec:rounding}, and hence an $\epsilon$-approximate solution may be rounded to an exact one. We apply cutting plane methods in Section \ref{sec:applycutingplane} to obtain our final runtime.

\section{Preliminaries}

We next introduce preliminaries and background relevant to the paper. Define the Lov\'{a}sz extension $F:\R^n\to\R$ by the following piecewise linear function:
$$F(x) \coloneqq \max_{v\in B(f)} v^\top x.$$
The Lov\'{a}sz extension $F$ is convex and agrees with $f$ on the corners of the unit hypercube. It can be evaluated in $O(n\cdot\eo+n \log n)$ time using the Edmond's greedy algorithm, where \eo\ is the cost of the evaluation oracle for $f(\cdot)$ \cite{fujishige2005submodular}. Its subdifferential is given by
$$\partial F(x) \ni \underset{v\in B(f)}{\arg \max} \; v^\top x.$$
Moreover, the Lov\'{a}sz extension can be written in closed form as
\begin{equation}\label{lovaszdef}
    F(x)=\sum_{i\in[n]} x_{\pi_i}(f(S_i)-f(S_{i-1})),
\end{equation}
where $\pi$ is the permutation induced by $x$ with ties broken lexicographically, i.e. $x_{\pi_1} \geq \cdots \geq x_{\pi_n}$, and $S_i\coloneqq\{\pi_1,\cdots,\pi_i\}$, with $S_0 \coloneqq \emptyset$.

Let $g$ be a convex function. It can be shown that minimizing the composite objective (P1)
\begin{equation}\label{primal}
    \tag{P1} \min_{x\in \R^n} g(x)+F(x)
\end{equation}
is dual to the problem (D2)\footnote{Throughout the rest of the paper, $g$ will be chosen to be a convex piecewise affine function. $F$ is also convex piecewise affine. Since the optimization is done over convex sets with affine constraints, strong duality holds.}:
\begin{equation}\label{dual}
    \tag{D2} \max_{y\in B(f)} -g^*(-y),
\end{equation}
where $g^*(y) \coloneqq \max_{x\in \R^n} y^\top x-g(x)$ is the Fenchel conjugate of $g$ \cite{zhou2018limited}. We will have $g$ be piecewise linear and convex in our setting, and therefore, duality holds and we get: 
\begin{equation}\label{primal}
    \min_{x\in \R^n} g(x)+F(x) = \max_{y\in B(f)} -g^*(-y). 
\end{equation}

We note that the base polymatroids are bounded, and this will be useful in deriving the dual of the line search problem.

\section{Dual of the Line search problem over $B(f)$}\label{sec:toyproblem}
First, we will solve the line search problem over the base polytope $B(f)$, which is bounded\footnote{To see this, note that \(x(e)\le f(e)\) for all $e\in E$, and since \(x(E)=f(E)\), we get $x(e) = x(E) - x(E\setminus\{e\}) = f(E) - x(E\setminus\{e\}) \ge f(E) - f(E\setminus\{e\})$. Therefore, for bounded submodular functions $f$, the base polytope $B(f)$ is also bounded.}.  Assuming that the problem is feasible (i.e., $\bigcup_{\lambda \geq 0} \lambda d \cap B(f) \neq \emptyset$): 
\begin{align}
    \max_{\lambda \in \R_+} \lambda \st \lambda d\in B(f). 
\end{align}
Let $g_R(x)=|R(1-d^\top x)|$ for some positive constant $R$ (we will later set $R = M \|d\|_1$). The Fenchel conjugate of $g_R$ is
$$
    g_R^*(y)\coloneqq \max_x y^\top x - g_R(x) = \max_x y^\top x - |R(1-d^Tx)|.
$$
We first claim that if $y = \lambda d$ with $\lambda \in [-R,R]$, then $g_R^*(y)$  evaluates to $\lambda$, otherwise it is unbounded.

\begin{lemma}\label{lem:fenchel}
For every \(y\in\mathbb{R}^n\),
\[
g_R^*(y)=
\begin{cases}
\lambda, & \text{if } y=\lambda d \text{ for some } \lambda\in[-R,R],\\[4pt]
+\infty, & \text{otherwise.}
\end{cases}
\]
\end{lemma}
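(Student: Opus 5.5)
The plan is to compute $g_R^*(y)=\sup_x \{y^\top x - R|1-d^\top x|\}$ directly. I will split $y$ according to whether it is parallel to $d$. Throughout, $d\neq 0$, since it has a positive entry, and $R>0$.

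\emph{Case 1: $y$ is not a multiple of $d$.} Decompose $y=\mu d + w$ with $w\neq 0$ and $w\perp d$, taking $\mu = y^\top d/\|d\|^2$. Choose $x = d/\|d\|^2 + t w$ for $t>0$. Then $d^\top x=1$, so the penalty term vanishes. Meanwhile $y^\top x = \mu + t\|w\|^2$, which tends to $+\infty$ as $t\to\infty$. Hence $g_R^*(y)=+\infty$.

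\emph{Case 2: $y=\lambda d$.} Substitute $s=d^\top x$. Every real $s$ is attainable because $d\neq 0$. The objective depends on $x$ only through $s$, so
\[
g_R^*(\lambda d)=\sup_{s\in\mathbb{R}} \bigl\{\lambda s - R|1-s|\bigr\}.
\]
This is a one-dimensional concave piecewise-linear function of $s$. For $s>1$ its slope is $\lambda-R$, and for $s<1$ its slope is $\lambda+R$.
\begin{itemize}
\item If $|\lambda|\le R$, the function is nondecreasing on $s<1$ and nonincreasing on $s>1$. The supremum is therefore attained at $s=1$, and its value is $\lambda$.
\item If $\lambda>R$, letting $s\to+\infty$ makes the objective diverge to $+\infty$.
\item If $\lambda<-R$, letting $s\to-\infty$ makes the objective diverge to $+\infty$.
\end{itemize}
Together these give exactly the stated formula.

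No step is a real obstacle. The only point that needs care is the reduction in Case 1: I need an explicit direction along which the linear term grows while the penalty stays at zero. The orthogonal component $w$ supplies this, and it exists precisely because $y\notin \operatorname{span}(d)$. I also note that the statement writes $g_R(x)=|R(1-d^\top x)|$; since $R>0$, this equals $R|1-d^\top x|$, which is the form I use.
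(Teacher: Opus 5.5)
Your proof is correct and follows the paper's argument. You split off the component of $y$ orthogonal to $d$ to get $+\infty$, and for $y=\lambda d$ you reduce to $\sup_{s}\{\lambda s - R|1-s|\}$ and read the answer off the slopes $\lambda\pm R$. The differences are cosmetic: you move along $w$ from a point on the hyperplane $d^\top x=1$, where the penalty vanishes, while the paper uses $x_t=t\,y_\perp$ with constant penalty $R$; and you correctly call the one-dimensional objective concave, where the paper calls it convex.
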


\begin{proof}
Write \(y\) as \(y = y_\parallel + y_\perp\) where \(y_\parallel\) is the projection of \(y\) onto the span of \(d\) and \(y_\perp\) is orthogonal to \(d\). Concretely,
\[
y_\parallel = \frac{y^\top d}{\|d\|^2} d,\qquad y_\perp = y - y_\parallel,\qquad y_\perp^\top d = 0.
\]

If \(y_\perp\neq 0\) then the supremum defining \(g_R^*(y)\) is \(+\infty\). Indeed, take any vector \(z\) with \(z^\top d=0\) and \(z^\top y_\perp>0\) (for example \(z=y_\perp\)). For \(t\in\mathbb{R}\) consider \(x_t = t z\). Then \(d^\top x_t = 0\) for all \(t\), hence
\[
y^\top x_t - g_R(x_t) = t\,y^\top z - R|1-0| = t\,(y^\top z) - R.
\]
As \(t\to +\infty\) this tends to \(+\infty\) because \(y^\top z = y_\perp^\top z \neq 0\). Thus \(g_R^*(y)=+\infty\) whenever \(y\) has a nonzero component orthogonal to \(d\). In particular, a necessary condition for finiteness is \(y_\perp=0\), i.e. \(y=\lambda d\) for some scalar \(\lambda\).

{Now assume \(y=\lambda d\). Let \(\delta := d^\top x\) and let \(u := d - d^T x\), i.e., the part of \(x\) in the orthogonal complement of \(d\). Then \(y^\top x = \lambda \delta\) and \(g_R(x)=R|1-\delta|\) depends only on \(\delta\). For any fixed \(\delta\) we may vary \(u\) arbitrarily but \(u\) does not affect the objective, so the supremum reduces to maximizing over the scalar \(\delta\):}
\begin{align}
g^*(\lambda d) \;=\; \sup_{\delta\in\mathbb{R}} \{\lambda \delta - R|1-\delta|\}.
\end{align}
This is a one-dimensional convex piecewise-linear (in \(\delta\)) objective. Compute its value by examining the two linear pieces:
\[
\lambda \delta - R|1-\delta| =
\begin{cases}
\lambda \delta - R(1-\delta) = (\lambda+R)\delta - R, &\text{if } \delta\le 1,\\[4pt]
\lambda \delta - R(\delta-1) = (\lambda-R)\delta + R, &\text{if } \delta\ge 1.
\end{cases}
\]
If \(\lambda \in [-R,R]\), then the slope in the left piece equals \(\lambda+R\ge 0\), and the slope in the right piece equals \(\lambda-R\le 0\). Hence the piecewise-linear function attains its maximum at the kink \(\delta=1\); substituting \(\delta=1\) yields the value \(\lambda\cdot 1 - R\cdot 0 = \lambda\). Thus \(g^*(\lambda d)=\lambda\) for \(\lambda\in[-R,R]\).

If \(\lambda>R\) then the slope of the right piece is \(\lambda-R>0\), so sending \(\delta\to +\infty\) makes the objective unbounded above; similarly if \(\lambda<-R\) the left-piece slope \(\lambda+R<0\) and taking \(\delta\to -\infty\) gives \(+\infty\). Therefore for \(|\lambda|>R\) we have \(g^*(\lambda d)=+\infty\). 

Combining the two cases gives the lemma. 
\end{proof}

Plugging this into the dual (\ref{dual}), we recover the dual problem
\begin{align*}
    \max_{y \in B(f)} -g^*(-y) = %\min_{y\in B(f)} h(y); \; \; h(y)\coloneqq 
    \begin{cases}
       -\lambda \text{ if } y = \lambda d \text{ for } \lambda\in[-R,R] \\
        +\infty \text{ otherwise},
    \end{cases}
\end{align*}

\color{black}
which solves the line search problem as long as the line search intersection $\lambda^\star  \leq R$, {and this can be achieved by setting $R \geq M / \min d_i$}. Thus, for sufficiently large $R$, {the primal problem} $\min_{x\in \R^n} g_R(x) + F(x)$ is equivalent to solving the line search problem. Furthermore, note that $F$ has a bounded subdifferential, and for sufficiently large $R$, the subdifferential of $g_R$ dominates the subdifferential of $F$, so that $g_R(x) + F(x)$ cannot be minimized at points where $g_R(x)$ does not attain its minimum value. Hence, we can solve the line search problem over $B(f)$ by minimizing {the following}: 
\begin{equation}
    \min_{{x \in \mathbb{R}^n}} F(x) \st d^ \top x=1.
\end{equation}

For completeness, we state this as a lemma next: 

\begin{lemma}\label{lem:penalty-active}
{Let \(f\) be nonnegative integer-valued with \(M:=\|f\|_\infty<\infty\), let \(F\) be its Lovász extension, and let \(d\in\mathbb Z^n\) be a nonzero direction. Define the penalized objective
\[
\Phi(x)\;=\;F(x) + g_R(x),\qquad g_R(x):=R\,|1-d^\top x|.
\]
Furthermore, if
\[
R>M,
\]
then every minimizer \(x^\star\) of \(\Phi\) satisfies \(d^\top x^\star = 1\). In particular, under this choice of \(R\) the constrained minimization \(\min\{F(x):d^\top x=1\}\) is equivalent to the unconstrained penalized problem \(\min_x \Phi(x)\).} 
\end{lemma}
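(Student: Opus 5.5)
The plan is an exact-penalty argument: I will show that any point off the hyperplane $\{d^\top x=1\}$ can be moved onto it while strictly decreasing $\Phi$. That rules it out as a minimizer. The tool is a Lipschitz bound on $F$ in a single coordinate, which comes from the fact that $B(f)$ is bounded.

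\emph{Step 1 (coordinate bound on $B(f)$).} For every $v\in B(f)$ and every $i\in E$ I will show $|v_i|\le M$. The upper bound $v_i\le f(\{i\})\le M$ is immediate from $v\in P(f)$. The lower bound follows from the argument in the footnote of Section~\ref{sec:toyproblem}:
\[
v_i = v(E)-v(E\setminus\{i\}) \ge f(E)-f(E\setminus\{i\}) \ge 0 - M = -M,
\]
which uses $f\ge 0$ and $f\le M$. Since $F(x)=\max_{v\in B(f)} v^\top x$, for any $x,x'$ we get
\[
F(x') - F(x) \le \max_{v\in B(f)} v^\top(x'-x).
\]
If $x'-x$ is supported on a single coordinate $j$, this gives $|F(x')-F(x)|\le M\,|x'_j-x_j|$.

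\emph{Step 2 (projection along a coordinate).} Fix $x$ with $\delta:=1-d^\top x\neq 0$. Pick any $j$ with $d_j\neq 0$; since $d\in\mathbb Z^n$, we have $|d_j|\ge 1$. Define $x' = x + (\delta/d_j)\,e_j$. Then $d^\top x' = d^\top x+\delta = 1$, so $g_R(x')=0$. Step 1 gives
\[
F(x')\le F(x) + M|\delta|/|d_j| \le F(x)+M|\delta|.
\]
Therefore
\[
\Phi(x') = F(x') \le F(x)+M|\delta| < F(x)+R|\delta| = \Phi(x),
\]
where the strict inequality uses $R>M$ and $\delta\neq0$. So $x$ is not a minimizer, and every minimizer of $\Phi$ satisfies $d^\top x^\star=1$.

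\emph{Step 3 (equivalence).} On the hyperplane $\{d^\top x=1\}$ we have $\Phi=F$. By Step 2, every off-hyperplane point is strictly dominated by some point on the hyperplane. Hence $\inf_x\Phi(x)=\inf\{F(x):d^\top x=1\}$, and the two problems have the same minimizers, including the case where the infimum is not attained or is $-\infty$.

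The only place needing care is Step 1, specifically the lower bound $v_i\ge -M$. This is exactly where nonnegativity of $f$ (or, more generally, the bound $\|f\|_\infty\le M$) enters. Using a coordinate direction rather than the orthogonal projection onto the hyperplane is what makes the integrality $|d_j|\ge1$ suffice for the clean threshold $R>M$, with no dependence on $\|d\|$.
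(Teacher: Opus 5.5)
Your proof is correct and rests on the same estimate as the paper's, but you reach it by a different route. The paper uses the first-order condition $0\in\partial F(x^\star)+\partial g_R(x^\star)$. Off the hyperplane, $\partial g_R(x^\star)=\{\pm Rd\}$ has $\ell_\infty$-norm at least $R$ because $d$ is integral. Every $v_F\in\partial F(x^\star)\subseteq B(f)$ has $\|v_F\|_\infty\le M<R$, so the two cannot cancel. You instead exhibit an explicit improving point by moving along a coordinate $e_j$ with $|d_j|\ge 1$. That is exactly the coordinate in which $|Rd_j|$ beats $|v_j|$ in the paper's comparison.

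Your version buys several things:
\begin{itemize}
\item It needs no subdifferential sum rule.
\item It proves the bound $|v_i|\le M$ on $B(f)$, which the paper's proof only asserts; the lower bound via $f(E)\ge 0$ is only implicit in the footnote of Section~\ref{sec:toyproblem}.
\item It proves the ``in particular'' equivalence of the two problems, including when the infimum is not attained or equals $-\infty$. The paper's proof does not address this.
\item It is quantitative: it gives $\Phi(x)-\Phi(x')\ge (R-M)\,|1-d^\top x|$, so any approximate minimizer of $\Phi$ can be moved onto the hyperplane without increasing the objective.
\end{itemize}
The paper's version is shorter and stays within the optimality-condition language it reuses for Lemma~\ref{lem:3}.

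One minor correction to your closing remark. The bound $\|f\|_\infty\le M$ by itself only gives $v_i\ge -2M$; for example, $f(\{1\})=f(\{2\})=M$, $f(\{1,2\})=-M$ has $(-2M,M)\in B(f)$. So it is $f(E)\ge 0$ that yields $-M$, and hence the threshold $R>M$ in your argument.
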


\begin{proof}
Note that
$$\partial g_R(x) = \begin{cases}
    \{Rd \cdot \text{sgn}(d^\top x - 1)\} & \text{if}\; 1 - d^\top x\neq  0\\
    \{\nu d : \nu \in [-1,1]\} & \text{if}\;  1 - d^\top x =  0
\end{cases}$$
Any potential minimizer $x^\star $ must satisfy the first order condition $\partial g_R(x^\star ) +\partial F_R(x^\star ) \ni 0$. For any vector $v_F \in \partial F(x^\star )$, $\| v_F \|_\infty \leq M$. If $d^\top x^\star  \neq 1$, then the only subgradient of $g_R$ at $x^\star $ is $Rd\cdot \text{sgn}(d^\top x- 1)$, which has $\infty$-norm $\geq R$ since $d$ is integral. Thus, any $x^\star $ with $d^\top x^\star \neq 1$ could not possibly minimize the penalized objective.
\end{proof}

As a simple example, let $\{1,2\}$ be the ground set, and $f(\emptyset)=0$, $f(\{1\})=f(\{2\})=2$, and $f(\{1,2\})=3$. The vertices of $B(F)$ are therefore $(1,2)$ and $(2,1)$. We consider the line search problem with $d = (3, 4)$. As established above, we minimize the Lov\'{a}sz extension with the constraint that $d^\top x=1$. The Lov\'{a}sz extension is
$$F(x)=\max\{2x_1+x_2, x_1+2x_2\}$$
and we wish to solve $$\min F(x) \st 3x_1+4x_2=1$$
The minimum is achieved at $x^\star =(1/7,1/7)$, and an objective value of $F(x^\star )=3/7$ is achieved, which is the same as the line search solution $\lambda^\star d = (9/7,12/7)\in B(f)$.

\section{Line search over $P(F)$}\label{sec:solvelinesearch}
Now we solve the line search problem over the extended polymatroid $P(f)$: 
\begin{equation}
    \max_{\lambda \in \R_+} \lambda \st \lambda d\in P(f),
\end{equation}
{by mapping the problem to an equivalent line search problem over a base polytope in a higher dimension.} 
%The idea is to lift $P(f)$ by one dimension so that it becomes the base polytope of a higher-dimensional polytope. 
This is not directly possible, since $P(f)$ is unbounded in the negative direction, but the base polytope is always bounded. However, the intersection point of {the line search $\lambda^\star  d$ must be bounded, since we assumed that} $d$ contains one positive coordinate. {We will instead} consider line search over a truncated $P(f)$ that is known to contain the intersection point, so that the truncated $P(f)$ corresponds to some base polytope. 

Let $C \gg M$ be a large positive constant. Now we define a submodular function over the ``lifted" ground set $\hat{E}= {E \cup \{n+1\}}$ parameterized by $C$:
\begin{equation} \label{eq:lifted}
\hat{f}:2^{\hat{E}} \times \R\to \R_+, \quad \hat{f}(S;C)=\begin{cases}
    f(S) &\text{ if } \{n+1\} \not\subset S, \\
    f(S \setminus \{n+1\}) +C &\text{ if } \{n+1\} \subseteq S\subsetneq \hat{E}, \\
    f(E) &\text{ if } S = \hat{E}.
\end{cases}
\end{equation}

We first note that $\hat{f}$ is submodular. First, for any subsets $S, T \subset \hat{E}$ such that either (i) $S \cup T \not\supseteq \{n+1\}$, or (ii) $S\cap T \supseteq \{n+1\}$ and $S \cap T \neq \hat E$, $\hat{f}(S\cap T) + \hat{f}(S \cup T) \leq \hat{f}(S) + \hat{f}(T)$, due to submodularity of $f$. Otherwise, if $S\cap T \supseteq \{n+1\}$ and $S \cup T = \hat E$, then $\hat{f}(S\cap T) + \hat{f}(S \cup T) = f(S \cap T) + C + f(E) \leq f(S) + f(T) + C \leq \hat{f}(S) + \hat{f}(T).$ The remaining case is if $S \setminus T \supseteq \{n+1\}$, then $\hat{f}(S) + \hat{f}(T) = f(S) + C + f(T) \geq f(S \cap T) + f(S \cup T) + C = \hat{f}(S \cap T) + \hat{f}(S \cup T)$.

Note that as $C \rightarrow \infty$, $B(\hat{f}(\cdot;C))$ becomes arbitrarily large. Let $x_{[n]}$ denote the prefix of $x$ of length $n$. The lifting induces a canonical correspondence between points $x\in B(\hat{f})$ and points in $P(f)$; namely, the map $\psi:(x_1,\cdots,x_n,x_{n+1}) \mapsto (x_1,\cdots,x_n)$.
Let $\psi(B(\hat{f})) \subset P(f)$ be the image of this map. If we set $C$ sufficiently large, then the optimum for the line search problem will lie in $\psi(B(\hat{f}))$, which means that it suffices to solve a ``lifted" line search problem over $B(\hat f)$. We state this precisely in the following lemma.

\begin{lemma} \label{lem:lifting-equivalence}
    Let $f:2^E\rightarrow \mathbb{Z}$ be an integer valued submodular function with $M \coloneqq \|f\|_\infty <\infty$, and $\hat f(\cdot; C)$ be the lifted submodular function over $\hat E = E \cup \{n+1\}$ be defined {above (see \ref{eq:lifted})}. Fix a non-zero integer direction $d\in \mathbb{Z}^n$, and let $\hat d = d \oplus 0 \in \mathbb{Z}^{n+1}$. For $C > M\|d\|_1$,
\begin{align}\max_{
    \substack{\lambda_1, \lambda_2 \in \R\\ \lambda_1 \hat{d} + \lambda_2 e_{n+1} \in B(\hat{f}(\cdot;C))}
} \lambda_1 
= \max_{\substack{\lambda d\in P(f) }} \lambda. \label{eq:obj}
\end{align}
\end{lemma}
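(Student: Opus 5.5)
We prove \eqref{eq:obj} by showing both inequalities, after first writing out explicitly what membership of $x=\lambda_1\hat d+\lambda_2 e_{n+1}=(\lambda_1 d,\lambda_2)$ in $B(\hat f(\cdot;C))$ means. By \eqref{eq:lifted} the constraints split into three families.
\begin{itemize}
\item[(a)] For $S\subseteq E$: $\lambda_1 d(S)\le f(S)$.
\item[(b)] For $S=T\cup\{n+1\}$ with $T\subsetneq E$: $\lambda_1 d(T)+\lambda_2\le f(T)+C$.
\item[(c)] The equality $\lambda_1 d(E)+\lambda_2=f(E)$, which forces $\lambda_2=f(E)-\lambda_1 d(E)$.
\end{itemize}
Family (a) is verbatim the condition $\lambda_1 d\in P(f)$.

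For ``$\le$'', any feasible pair $(\lambda_1,\lambda_2)$ satisfies (a), so $\lambda_1$ is feasible for the right-hand side. Hence the left-hand maximum is at most the right-hand one.

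For ``$\ge$'', let $\lambda^\star$ be the right-hand optimum. It is well defined and attained, because $P(f)$ is closed. It satisfies $\lambda^\star\ge 0$, since $f\ge 0$ gives $0\in P(f)$. It also satisfies $\lambda^\star\le M$: if $d_i>0$ then $d_i\ge 1$ by integrality, and $\lambda^\star d_i\le f(\{i\})\le M$. Set $\lambda_1=\lambda^\star$ and $\lambda_2=f(E)-\lambda^\star d(E)$. Then (a) and (c) hold by construction, and it remains to verify (b). Substituting $\lambda_2$, condition (b) becomes
\[
f(E)-\lambda^\star d(E\setminus T)\le f(T)+C .
\]

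Verifying this inequality is the main obstacle, because the naive bound $f(E)-f(T)\le M$ together with $|\lambda^\star d(E\setminus T)|\le M\|d\|_1$ overshoots $C>M\|d\|_1$ by $M$. To fix this, first use submodularity and $f(\emptyset)\ge 0$ to get $f(E)\le f(T)+f(E\setminus T)$. It then suffices to show
\[
f(E\setminus T)-\lambda^\star d(E\setminus T)\le C .
\]
Bound $f(E\setminus T)\le M$. Since $\lambda^\star\ge 0$, we have $-\lambda^\star d(E\setminus T)\le \lambda^\star\|d^-\|_1\le M\|d^-\|_1$, where $d^-$ is the vector of the negative parts of $d$. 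Because $d$ has an integral positive entry, $\|d^-\|_1\le\|d\|_1-1$. The left side is therefore at most $M+M(\|d\|_1-1)=M\|d\|_1<C$. So (b) holds, the pair $(\lambda^\star,\lambda_2)$ is feasible for the left-hand side, and the left-hand maximum is at least $\lambda^\star$, which completes the equality.
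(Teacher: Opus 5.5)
Your proof is correct and follows the same route as the paper: projecting a feasible lifted point onto its first $n$ coordinates gives $\le$, and lifting $\lambda^\star d$ with last coordinate $f(E)-\lambda^\star d(E)$ gives $\ge$. Your explicit check of the constraints for the sets $T\cup\{n+1\}$ is more careful than the paper's one-line appeal to $C > M\|d\|_1 \ge \|z^\star\|_1$, which on its own leaves a slack of $M$. The refinement $\lambda^\star\|d^-\|_1 \le M(\|d\|_1-1)$ is what closes that slack; the submodularity step is harmless but not needed, since $f(E)-f(T)\le M$ already gives the same bound.
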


\begin{proof}
    Let $z^\star = \lambda^\star d \in \R^n$ be the optimal solution to the original line search problem $\max_{\lambda d \in P(f)}\lambda$ attaining objective $\lambda^\star$. Let $x^\star =\lambda_1^\star \hat d + \lambda_2^\star e_{n+1} \in \mathbb{R}^{n+1}$ be the optimal solution to 
    \begin{align}
    \max_{\lambda_1, \lambda_2 \in \R, \lambda_1 \hat{d} + \lambda_2 e_{n+1} \in B(\hat{f}(\cdot;C))} \lambda_1, \label{eq:l1l2}
    \end{align}
attaining objective $\lambda_1^\star$. $x^\star \in B(\hat f(\cdot; C))$ corresponds to $x^\star_{[n]} \in P(f)$ which obtains the same objective value, which shows that $\lambda_1^\star \leq \lambda^\star$. Now to show that $\lambda^\star_1 \geq \lambda^\star$, $z^\star \in P(f)$ corresponds to $\hat z =(z_1^\star,\cdots, z_n^\star,f(E) - \sum_i z^\star_i) \in B(\hat f(\cdot; C))$. Note that $\hat z$ is feasible for $B(\hat f(\cdot; C))$ if $\hat z (S) \leq \hat f(S;C)$ for all $S \subseteq \hat E$, which holds true if $C > M\|d\|_1 \geq \|z^\star\|_1$. This shows the equivalence of the two problems, as $\lambda^\star = \lambda_1^\star$. \qedhere

    %Then, $z^\star \in P(f)$ corresponds to $x = $(z_1^\star,\cdots, z_n^\star,f(E) - \sum_i z^\star_i) \in B(\hat f(\cdot; C))$$; therefore, $\lambda_1^\star \geq \lambda^\star$.} 
    
    %{Further, since $x^\star$ is feasible for $B(\hat f(\cdot; C))$, $x^\star (S) \leq \hat f(S;C)$ for all $S \subseteq \hat E$, which holds true if $C > M\|d\|_1 \geq \|z^\star\|_1$. Further, note that $(x_1^\star, \hdots, x_n^\star) \in P(f)$ since $f(S) = \hat{f}(S)$ if $n+1 \notin S$. This shows the equivalence of the two problems, as $\lambda^\star \geq \lambda_1^\star$.} 
\end{proof}

Now we will derive the dual of the lifted line search problem.

\begin{lemma} \label{lem:lifted-conjugate}
    Define the penalty $h_R(x) = R|1 - d^\top x_{[n]}| + R|x_{n+1}|$, where $x_{[n]}$ denotes the $n$-prefix of $x$. Let $\hat F(\cdot ;C)$ be the Lov\'{a}sz extension of $\hat f(\cdot ; C)$. Then
    $$\max_{
    \substack{\lambda_1, \lambda_2 \in \R\\ \lambda_1 \hat{d} + \lambda_2 e_{n+1} \in B(\hat{f}(\cdot;C))}
} \lambda_1 = \min_{x \in \R^{n+1}} \hat F(x;C) + h_R(x).
    $$
\end{lemma}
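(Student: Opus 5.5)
We apply the duality between \eqref{primal} and \eqref{dual} to the lifted ground set $\hat E$, with $g = h_R$ and $f$ replaced by $\hat f(\cdot;C)$. Since $h_R$ is convex and piecewise affine and $\hat F(\cdot;C)$ is the Lov\'asz extension of $\hat f(\cdot;C)$, strong duality gives
\[
\min_{x\in\R^{n+1}} \hat F(x;C) + h_R(x) \;=\; \max_{y\in B(\hat f(\cdot;C))} -h_R^*(-y).
\]
The whole argument therefore reduces to computing $h_R^*$, in analogy with Lemma~\ref{lem:fenchel}, and then identifying the right-hand side with the lifted line search problem of Lemma~\ref{lem:lifting-equivalence}.

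The conjugate computation uses the separable structure $h_R(x) = g_R(x_{[n]}) + R|x_{n+1}|$, where $g_R(z)=R|1-d^\top z|$ on $\R^n$. The conjugate of a separable sum is the sum of conjugates, so for $y=(y_{[n]},y_{n+1})$ we get $h_R^*(y) = g_R^*(y_{[n]}) + (R|\cdot|)^*(y_{n+1})$. By Lemma~\ref{lem:fenchel}, the first term equals $\lambda_1$ if $y_{[n]}=\lambda_1 d$ with $\lambda_1\in[-R,R]$, and $+\infty$ otherwise. The conjugate of $R|t|$ is the indicator of $[-R,R]$. Hence
\[
h_R^*(y) = \begin{cases}\lambda_1, & y = \lambda_1\hat d + \lambda_2 e_{n+1},\ |\lambda_1|\le R,\ |\lambda_2|\le R,\\ +\infty, & \text{otherwise.}\end{cases}
\]
Substituting $-y$, the dual becomes the maximum of $\lambda_1$ over $y=\lambda_1\hat d+\lambda_2 e_{n+1}\in B(\hat f(\cdot;C))$, subject to the box constraints $|\lambda_1|,|\lambda_2|\le R$. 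The sign flip of $(\lambda_1,\lambda_2)$ is harmless because the box is symmetric.

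The remaining step, and the only real obstacle, is showing that the box constraints are inactive, i.e.\ that some optimizer of the left-hand side of the lemma has $|\lambda_1^\star|,|\lambda_2^\star|\le R$. The statement leaves $R$ unquantified, so we make explicit that $R$ is taken sufficiently large relative to $C$, $M$ and $d$, for instance $R \ge (n+1)C + M\|d\|_1$. To justify this, note that $B(\hat f(\cdot;C))$ is bounded by the footnote in Section~\ref{sec:toyproblem}: every coordinate of a point in it lies in $[\hat f(\hat E)-\hat f(\hat E\setminus\{e\}),\, \hat f(\{e\})]$, so each coordinate has magnitude $O(C+M)$.
\begin{itemize}
\item Since $y_{n+1}=\lambda_2$, this immediately gives $|\lambda_2| = O(C+M)$.
\item Since $\lambda_1 d_i$ is a coordinate for every $i$ and some $d_i\neq0$ is integral, it also gives $|\lambda_1| = O(C+M)$.
\end{itemize}
Thus for $R$ larger than this bound, the truncated dual coincides with the untruncated maximization in the statement. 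We also need the dual to be feasible, so that the maximum is finite and attained. This holds because $\lambda_1=0$, $\lambda_2=\hat f(\hat E)=f(E)$ gives the point $f(E)e_{n+1}$, which lies in $B(\hat f)$ since $f\ge 0$ and $C\ge M$.

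With the box constraints removed, the two sides of the lemma agree.
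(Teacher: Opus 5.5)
Your proposal is correct and follows the same route as the paper. Both apply the (P1)/(D2) duality with $g=h_R$, split $h_R^*$ into the conjugate from Lemma~\ref{lem:fenchel} plus the indicator of $[-R,R]$ for the last coordinate, and read off the lifted line search. Where the paper only asserts ``for $R$ sufficiently large,'' you carry out that step explicitly: you bound the coordinates of $B(\hat f(\cdot;C))$ so the box $|\lambda_1|,|\lambda_2|\le R$ becomes vacuous, and you check that the dual is feasible.
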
 

%{\color{red} this is a cleaner lemma, shorter. just deriving the dual. I've moved the rest to Lemma 4.3. }

\begin{proof}

{The Fenchel conjugate of the function }
$h_R(x)\coloneqq R|1-d^\top x_{[n]}| + R|x_{n+1}|$ {is given by:}
\begin{align*}
    h_R^*(y)&=\max_x \left\{  y_{[n]}^\top x_{[n]} + y_{n+1}x_{n+1} - R|1-d^\top x_{[n]}| - R|x_{n+1}| \right\} \\
    &= \max_{x_{[n]}} \left\{  y_{[n]}^\top x_{[n]} - R|1-d^\top x_{[n]}|  \right\} + \max_{x_{n+1}}\{ y_{n+1}x_{n+1} - R|x_{n+1}| \}.
\end{align*}
The first maximization term is exactly the same as that described in Lemma \ref{lem:fenchel}, and the second one is bounded for $y_{n+1} \in [-R,R]$ and takes on a value of $0$ at $x_{n+1}=0$. Hence,
\begin{equation}
    h_R^*(y)=\begin{cases}
        \lambda \text{ if } y_{[n]} = \lambda d \text{ for } \lambda \in [-R,R] \text { and } y_{n+1}\in[-R,R], \\
        +\infty \text{ otherwise}.
    \end{cases}
\end{equation}

{This implies that $\max_{B(\hat{f})} -h_R^*(-y)$ solves the \eqref{eq:l1l2} line search problem over $B(\hat{f})$ for $R$ sufficiently large, e.g., $R \geq M \|d\|_1$.}
\end{proof}

%{\color{red} second part of the lemma:}

\begin{lemma}\label{lem:3} Let $f:2^E\rightarrow \mathbb{Z_+}$ be an integer valued submodular function with $M \coloneqq \|f\|_\infty <\infty$, and $\hat f(\cdot; C)$ be the lifted submodular function over $\hat E = E \cup \{n+1\}$ be defined in \eqref{eq:lifted}. Let the penalized objective $\Phi(x) = \hat F(x;C) + h_R(x)$ be defined for $C > M \|d\|_1$. If $R > C + M$, then every minimizer $x^\star$ of $\Phi$ satisfies
    $$d^\top x^\star_{[n]} = 1,\qquad x^\star_{n+1}=0.$$

The penalty $h_R$ vanishes along these constraints, so we have for sufficiently large $R$
\begin{equation}\label{eq:thmduality}
 \min_{x \in \R^{n+1}} \hat F(x;C) + h_R(x) = \min_{\substack{x \in \R^{n+1},\\ d^\top x_{[n]}=1,\\ x_{n+1}=0}} \hat F(x;C). 
\end{equation}

\end{lemma}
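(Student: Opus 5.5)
The plan is to follow the subgradient argument of Lemma \ref{lem:penalty-active}, now applied separately to the two penalty terms of $h_R$. The driving estimate is that every subgradient of $\hat F(\cdot;C)$ has all coordinates bounded by $C+M$. By the greedy formula \eqref{lovaszdef}, any $v\in\partial\hat F(x;C)$ is a vertex (or convex combination of vertices) of $B(\hat f(\cdot;C))$. Each coordinate of such a vertex is a marginal value $\hat f(S\cup\{e\};C)-\hat f(S;C)$.

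First I will bound these marginals by inspecting the three cases in \eqref{eq:lifted}.
\begin{itemize}
\item For $e\le n$ the marginal is either a marginal of $f$, hence at most $M$ in absolute value since $f\ge 0$, or it is the marginal $f(E)-(f(E\setminus\{e\})+C)$ into $\hat E$. The latter lies in $[-C-M,\,M-C]$.
\item For $e=n+1$ the marginal is $C+(\text{a value of } f)$, or $f(E)-f(E)=0$ when it completes $\hat E$.
\end{itemize}
So $\|v\|_\infty\le C+M$ for all $v\in\partial\hat F(x;C)$, and by convexity this holds for the whole subdifferential.

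Next I write $\partial h_R(x)=R\,\partial|1-d^\top x_{[n]}|\,(d\oplus 0)+R\,\partial|x_{n+1}|\,e_{n+1}$. This is valid by the sum rule, since the two terms depend on disjoint coordinates. A minimizer $x^\star$ must satisfy $0\in\partial\hat F(x^\star;C)+\partial h_R(x^\star)$. I then argue by contradiction in two cases.
\begin{itemize}
\item Suppose $x^\star_{n+1}\neq 0$. The $(n+1)$-th coordinate of every element of $\partial h_R(x^\star)$ is $\pm R$, because the first term of $h_R$ has zero in that coordinate. Cancelling it would need $|v_{n+1}|=R>C+M$, which is impossible.
\item Suppose $d^\top x^\star_{[n]}\neq 1$. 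The first $n$ coordinates of every element of $\partial h_R(x^\star)$ equal $\pm R d$. Since $d$ is a nonzero integer vector, some coordinate has magnitude at least $R>C+M$, which $v_{[n]}$ cannot cancel.
\end{itemize}
Hence both constraints hold at every minimizer. The main obstacle is simply getting the marginal bound right for the lifted function, in particular the negative marginal $f(E)-f(E\setminus\{e\})-C$, which is what forces $R>C+M$ rather than $R>M$.

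Finally, \eqref{eq:thmduality} follows. On the affine set $\{d^\top x_{[n]}=1,\,x_{n+1}=0\}$ we have $h_R\equiv 0$, so the left side is at most the right side. Conversely, the minimum of $\Phi$ is attained because $\Phi$ is piecewise affine and bounded below. By the above, this minimum is attained on the constraint set, where $\Phi=\hat F$, so the left side is at least the right side.
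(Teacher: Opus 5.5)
Your proposal is correct and follows the paper's argument. Every element of $\partial\hat F(x;C)$ has $\infty$-norm at most $C+M$, while off the constraint set every element of $\partial h_R(x)$ has a coordinate of magnitude at least $R>C+M$, so $0\notin\partial\Phi(x)$ there. Your explicit marginal case analysis and your attainment argument for the equality of minima are more careful than the paper, which only asserts the subgradient bound and does not discuss attainment.

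Two small touch-ups:
\begin{enumerate}
\item The marginal of $n+1$ on a set $S\subsetneq E$ is exactly $C$, not $C$ plus a value of $f$. The bound $C+M$ is unaffected.
\item The claim that $\Phi$ is bounded below needs one line. Since $C\ge f(E)$, the point $0\oplus f(E)$ lies in $B(\hat f(\cdot;C))$. Hence $\hat F(x;C)\ge f(E)\,x_{n+1}$, and so $\Phi(x)\ge (R-f(E))\,|x_{n+1}|\ge 0$.
\end{enumerate}
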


\begin{proof}
This follows via similar arguments of subgradients being dominated, as in Lemma \ref{lem:penalty-active}. Namely, for any $v_F\in\partial \hat F(x;c)$, $\|v_F\|_\infty \leq \|\hat f (\cdot ,C)\|_\infty \leq M+C$, but for any x with $d^\top x_{[n]} \neq 1$ or $x_{n+1} \neq 0$, $\|v_{h_R}\|_\infty \geq R$ for any $v_{h_R} \in \partial h_R(x)$, and so the first order optimality condition could not be satisfied.
\end{proof}

Next, we prove the main result of this section by showing that the optimization of $\hat F(\cdot; C)$ may be restricted to the first orthant, thus deriving the final form of the dual.

\begin{theorem}[Line search duality]\label{thm:linesearchdual} {For any submodular function $f: 2^E \rightarrow \mathbb{R}_+$, with the Lov\'{a}sz extension $F(\cdot)$, and a non-zero direction $d \in \mathbb{R}^n$, the line search problem is dual to minimization of the Lov\'{a}sz extension intersected with the hyperplane $d^Tx = 1$ in the positive orthant, i.e.,} 
\begin{equation}\label{eq:thmduality}
\max_{\substack{\\ \lambda d \in P(f)}} \lambda
=\min_{\substack{x\in\R^n_+, \\ d^\top x= 1.}} F(x). 
\end{equation}
\end{theorem}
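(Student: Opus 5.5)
The plan is to chain the three preceding lemmas and then remove the lifted coordinate. First I apply Lemma~\ref{lem:lifting-equivalence} with $C > M\|d\|_1$; this rewrites the left side $\max_{\lambda d\in P(f)}\lambda$ as the lifted line search over $B(\hat f(\cdot;C))$. Next, Lemma~\ref{lem:lifted-conjugate} identifies that value with $\min_x \hat F(x;C)+h_R(x)$. Finally, Lemma~\ref{lem:3} with $R > C+M$ turns this into
\[
\min\bigl\{\hat F(x;C) : x\in\R^{n+1},\ d^\top x_{[n]}=1,\ x_{n+1}=0\bigr\}.
\]
What remains is to show that this lifted minimum equals $\min\{F(x): x\ge 0,\ d^\top x=1\}$.

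\textbf{Evaluating $\hat F$ at $x=(x_{[n]},0)$.} I use the greedy formula \eqref{lovaszdef}. The coordinate $n+1$ has value $0$, so in the sorted order it sits after all positive coordinates and before all negative ones.

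Suppose first that $x_{[n]}\ge 0$. Then every prefix set $S_i$ before index $n+1$ is a subset of $E$, where $\hat f=f$. The elements that come after $n+1$ have value $0$, so they contribute nothing. The term for $n+1$ itself has coefficient $0$. Hence $\hat F(x;C)=F(x_{[n]})$, and the lifted minimum is at most $\min_{x\ge0,\,d^\top x=1}F(x)$.

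For the reverse inequality, suppose $x_{[n]}$ has negative entries. A negative coordinate $x_j$ comes after $n+1$ in the order, so its prefix set contains $n+1$ but is a proper subset of $\hat E$ (except possibly the last one). The increments $\hat f(S_i)-\hat f(S_{i-1})$ for these elements equal the corresponding increments of $f$. The one exception is the final step to $\hat E$, which has increment $f(E)-f(E\setminus\{\text{last}\}\cup\ldots)-C$, a large negative number. That step is multiplied by the most negative coordinate $x_{\pi_{n+1}}<0$, so it contributes roughly $+C|x_{\pi_{n+1}}|$.

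\textbf{Main obstacle.} I must show that this large positive contribution makes negative coordinates unprofitable. The cleanest route I see is the dual side rather than direct computation.

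Concretely, $\hat F(x;C)=\max_{v\in B(\hat f)}v^\top x$. The point $\hat z=(\lambda^\star d,\ f(E)-\lambda^\star d(E))$ constructed in the proof of Lemma~\ref{lem:lifting-equivalence} lies in $B(\hat f)$. This gives weak duality directly: for any feasible $x$ with $x_{n+1}=0$ and $d^\top x_{[n]}=1$, we have $\hat F(x;C)\ge \hat z^\top x=\lambda^\star$.

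So the real work is to establish $\min_{x\ge0,\,d^\top x=1}F(x)\le\lambda^\star$, i.e.\ to exhibit a nonnegative $x$ achieving $\lambda^\star$. For this I would argue by LP duality on the restricted problem
\[
\min\{F(x): x\ge 0,\ d^\top x=1\} \;=\; \min_{x\ge0,\ d^\top x=1}\ \max_{v\in P(f)} v^\top x .
\]
This identity holds because, for $x\ge0$, maximizing $v^\top x$ over $P(f)$ gives the same value as over $B(f)$. Applying the minimax theorem (compactness of $B(f)$ suffices after this reduction), the right side equals
\[
\max_{v\in P(f)}\ \min_{x\ge0,\ d^\top x=1} v^\top x .
\]
The inner minimum over the ray-simplex $\{x\ge0,\ d^\top x=1\}$ is
\[
\min_{x\ge0,\ d^\top x=1} v^\top x=
\begin{cases}
\mu, & \text{if } v\ge \mu d \text{ componentwise for some } \mu,\\
-\infty, & \text{otherwise},
\end{cases}
\]
where $\mu$ is the largest such scalar, by Farkas' lemma. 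Because $P(f)$ is down-closed, $v\ge\mu d$ with $v\in P(f)$ implies $\mu d\in P(f)$. The outer maximum is therefore exactly $\max\{\mu:\mu d\in P(f)\}=\lambda^\star$.

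This minimax/Farkas step, and in particular the justification of the minimax exchange given that the feasible set in $x$ may be unbounded, is where I expect the care to be needed. I would handle it by invoking the Fenchel duality (P1)/(D2) with $g$ the indicator of $\{x\ge0,\ d^\top x=1\}$ extended to $P(f)$, in place of the lifting.
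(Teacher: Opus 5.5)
Your proof is correct, but its decisive step differs from the paper's. The paper stays inside the lifted problem. After chaining Lemmas~\ref{lem:lifting-equivalence}, \ref{lem:lifted-conjugate} and \ref{lem:3}, it uses the greedy formula at a point with a negative coordinate: the last element $\pi_{n+1}$ of the order has partial derivative $\hat f(\hat E)-\hat f(S_n)\le -C+M$. So with $C=M^2\|d\|_1$, raising that coordinate lowers $\hat F$. Hence the minimizer lies in the orthant, where $\hat F(\cdot;C)$ agrees with $F$.

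You drop that computation and prove $\min_{x\ge0,\,d^\top x=1}F(x)\le\lambda^\star$ by duality on the restricted problem itself: pass to the compact $B(f)$, exchange min and max, evaluate the inner minimum by Farkas, and use that $P(f)$ is down-closed. With this in hand, the lemma chain and the $\hat z$ step are not needed. For feasible $x$ you get $F(x)\ge(\lambda^\star d)^\top x=\lambda^\star$ directly from $x\ge0$ and $\lambda^\star d\in P(f)$. No minimax theorem is needed either: the restricted problem is the LP $\min\{t:\ t\ge v^\top x \text{ for all vertices } v \text{ of } B(f),\ x\ge0,\ d^\top x=1\}$, whose LP dual is $\max\{\mu:\ \exists v\in B(f),\ v\ge\mu d\}$. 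One small bookkeeping point: exchanging min and max over $B(f)$ gives an outer maximum over $B(f)$, not $P(f)$. This is harmless, since you only need the upper bound and $B(f)\subseteq P(f)$.

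What your route buys is generality and rigor. It uses neither integrality of $f$ and $d$ nor the constants $C,R$. This matters because the theorem is stated for real $f\ge0$ and real $d$, while Lemmas~\ref{lem:penalty-active}, \ref{lem:lifting-equivalence} and \ref{lem:3} rely on integrality. It also avoids two things the paper's subgradient step has to handle: restoring $d^\top x_{[n]}=1$ after raising a coordinate, and ties among the most negative coordinates. The paper's route, in exchange, stays within the Fenchel/penalty framework of Section~\ref{sec:toyproblem} and produces the lifted and penalized formulations along the way, though the theorem itself does not require them.
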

\begin{proof}
So far, using Lemmas \ref{lem:lifting-equivalence}, \ref{lem:lifted-conjugate} and \ref{lem:3} with $C \geq M\|d\|_1$, $R \geq  C + M$, we have shown: 
\begin{equation}\label{eq:thmduality}
\max_{\substack{\\ \lambda d \in P(f)}} \lambda
\overset{(\ref{lem:lifting-equivalence})}{=} \max_{
    \substack{\lambda_1, \lambda_2 \in \R,\\ \lambda_1 \hat{d} + \lambda_2 e_{n+1} \in B(\hat{f}(\cdot;C)).}
} \lambda_1 \overset{(\ref{lem:lifted-conjugate})}{=} \min_{x \in \R^{n+1}} \hat F(x;C) + h_R(x) \overset{(\ref{lem:3})}{=} \min_{\substack{x \in \R^{n+1},\\ d^\top x_{[n]}=1,\\ x_{n+1}=0}} \hat F(x;C). 
\end{equation}

We need to show for $C$ sufficiently large,
\begin{align}
\min_{\substack{x \in \R^{n+1},\\ d^\top x_{[n]}=1,\\ x_{n+1}=0}} \hat F(x;C) = \min_{\substack{x\in\R^n_+, \\ d^\top x= 1.}} F(x). \label{eq:49}
\end{align}
We will now set $C = M^2 \|d\|_1$ and show \eqref{eq:49} by proving that any $x \notin \R_+^{n+1}$ cannot be optimal, by analyzing the subdifferential $\partial \hat F(x;C)$. First, due to our choice of $C$, unless $n+1\notin S$ or $S = \hat E$, $\hat f(S;C) = C + O(M)  \gg M$. If $n+1\in S$, $\hat f(S;C) \leq M$. The optimum $x^\star$ to (\eqref{eq:49}, left hand side) induces a permutation $\pi$ and sets $S_i$ as defined in equation (\ref{lovaszdef}). Now suppose that $x^\star$ had only $k<n+1$ non-negative coordinates, i.e. $$x^\star _{\pi_1} \geq \cdots \geq x^\star _{\pi_{k}} = 
x^\star _{n+1}=0>x^\star _{\pi_{k+1}}\geq \cdots \geq x^\star _{\pi_{n+1}}.$$
Then we have
\begin{align} \label{eq:subgradient-norm}
\hat{F}(x^\star ;C) &=\sum_{i=1}^{k-1}x^\star_{\pi_i}
\underbrace{(\hat{f}(S_i) - \hat{f}(S_{i-1}))}_{= O(M)} + \underbrace{x^\star_{\pi_{k}}(\cdots)}_{=0} + 
\sum_{i=k+1}^{n}x^\star_{\pi_i}
\underbrace{(\hat{f}(S_i) - \hat{f}(S_{i-1}))}_{= O(M)} \\
&+x^\star_{\pi_{n+1}}
\underbrace{(\hat{f}(S_{n+1})- \hat{f}(S_n))}_{=\hat{f}(\hat{E})-\hat{f}(S_n) \leq -C + M}. \nonumber
\end{align}
We see that $|\partial \hat F(x^\star ;C)_{\pi_i}| \leq M$ for all coordinates $\pi_i$ except for the last coordinate in the permutation $\pi_{n+1}$. Thus, for any given $d$, we can choose $C$ sufficiently large so that the subgradient on the last coordinate is sufficiently negative and always dominates the other coordinates, so that we can improve the objective by raising $x_{\pi_{n+1}}^\star$, contradicting its optimality. Therefore, $x^\star$ cannot have a negative coordinate. Note that $\hat F(x;C)=F(x_{[n]})$ if $x \geq 0$ and $x_{n+1} = 0$, so the problem $\min_{x:x_{n+1} = 0} \hat{F} (x;C)$ reduces to minimizing the original Lov\'{a}sz extension $\min_{x:d^\top x = 1, x \in \R_+^n} F(x)$, which concludes the proof.
\qedhere
\end{proof}

For a simple example, reconsider the example from section \ref{sec:toyproblem}; i.e. let $\{1,2\}$ be the ground set, and $f(\emptyset)=0$, $f(\{1\})=f(\{2\})=2$, and $f(\{1,2\})=3$. Consider the line search direction $d = [1,-1]^\top$. As before, the Lov\'{a}sz extension is
$F(x)=\max\{2x_1+x_2, x_1+2x_2\}$,
and this time, we want to minimize $F$ over the set $\{x_1, x_2\in \R_+:x_1-x_2=1\}$.
The dual minimum is achieved at $x^\star =(1,0)$, and an objective value of $F(x^\star )=2$ is achieved, which is the same as the line search solution $\lambda^\star = 2$ with $\lambda^\star d = (2,-2)\in P(f)$.

\section{Computing the exact intersection via rounding}\label{sec:rounding}

The line search intersection occurs at
$$\lambda^\star  = \max \left\{ \lambda : f(S) - \lambda d(S) \geq 0 \; \forall S\subseteq E \right\}.$$
The exact intersection can be computed using discrete Newton's method \cite{discretenewton}, which finds a zero of the piecewise affine concave envelope
\begin{equation} \label{envelope}
    g(\lambda) \coloneqq \min_{S \subseteq E} f(S)-\lambda d(S).
\end{equation}
The discrete Newton's method, given an initial starting point $\lambda_0\geq \lambda^\star $, repeatedly iterates $S_i = \arg \min_{S \subseteq E} f(S) -\lambda_id(S)$, and $\lambda_{i+1} = f(S_i)/d(S_i)$. The algorithm terminates when $g(\lambda_i)=0$. Note that the iterations are strictly decreasing until termination.

The discrete Newton's method takes $\tilde O (n^2)$ iterations, where each iteration requires one $\sfm$ call. However, a natural question is if one can do better with side information about the range of $\lambda^\star $. We show that the iterates of the discrete Newton's method lie on a discrete ladder, with spacing of the ladder say $\epsilon$. This implies that given a $\lambda_0$ such that $\lambda_0-\lambda^\star  \leq O(\epsilon)$, the discrete Newton's algorithm will converge in $O(1)$ iterations.

Consider the set $\Lambda=\{\lambda_S : S \subseteq E\}$, where $\lambda_S=f(S)/d(S)$ if $d(S) > 0$, and $+\infty$ otherwise. Then $\lambda^\star =\min_\Lambda \lambda$. Note that this set contains all the possible iterates of the discrete Newton's method, as $\lambda_{i+1} = f(S_i)/d(S_i) \in \Lambda$. For any two sets $S_1,S_2$ with $\lambda_{S_1} \neq \lambda_{S_2}$, we must have
$$|\lambda_{S_1} - \lambda_{S_2}|=
\left| \frac{f(S_1)}{d(S_1)} - \frac{f(S_2)}{d(S_2)} \right|
=\left| \frac{f(S_1)d(S_2) - f(S_2)d(S_1)}{d(S_1)d(S_2)}\right|
\geq \frac{1}{\|d\|_1^2} = \epsilon,$$
using the integrality assumption of $f(\cdot)$ and $d$. If we can obtain an $\epsilon$-approximation $\lambda_\epsilon$ to $\lambda^\star $, and initialize the discrete Newton's method with $\lambda_0 = \lambda_\epsilon$, then the first iteration of this method must converge exactly to $\lambda^\star $. Otherwise, there would be an iterate $\lambda_1 \in \Lambda$ with $\lambda^\star  < \lambda_1 < \lambda_0$ with the spacing $\lambda_1-\lambda^\star  < \epsilon$. More generally, if the discrete Newton's method is initialized with $\lambda_0 - \lambda^\star  \leq k\epsilon$, the algorithm must converge within $k$ iterations, i.e., $O(\epsilon)$ gap requires at most $O(1)$ iterations. 

%Suppose we use some approximate algorithm to obtain $\lambda_\epsilon$, a $\epsilon$-approximation to $\lambda^\star $, with $\epsilon = \|d\|_1^{-2}$, and we have $\lambda_\epsilon \geq \lambda^\star $. Then a minimizer for the submodular function $f'(S)=f(S)-\lambda_\epsilon d(S)$ will be a tight set for the line search problem. Suppose $T$ was a minimizer for $f'$, but was not a tight set for the line search problem. Then $\lambda_\epsilon > \lambda_{T} > \lambda^\star $, but then $|\lambda_{T}-\lambda^\star | < \epsilon$, which is a contradiction. Thus, we must have $\lambda_T=\lambda^\star $. Thus, at the cost of one additional SFM, we can recover an exact solution to the line search problem from an $\epsilon$-approximate solution, with $\epsilon=\|d\|_1^{-2}$.

We next show that this analysis is tight. That is, there exist instances where the discrete Newton's method requires initialization within $\lambda_0 - \lambda^\star  = O(\epsilon)$ for convergence within one iteration. %The $1/\|d\|_1^2$ scaling required to solve the line search problem in a single iteration of Newton's algorithm is tight, as shown by the following example. 
Consider intervals $[i,j] \coloneqq \{k \in E:i\leq k \leq j\}$, and the function
$$h:[i,j]\mapsto 4^{j(j-1)/2}4^i.$$
The function $h$ can be extended to a submodular function $f:2^E\rightarrow \mathbb{Z}$ by Lemma 8 of \cite{discretenewton}, whose result we briefly restate. Let $\mathcal{I}(S)$ be the set of maximum intervals contained in $S$; for example, $\mathcal{I}(\{1,2,3,6,9,10\})=\{[1,3],[6,6],[9,10]\}$. Then $f(S)\coloneqq\sum_{I\in \mathcal{I}(S)}h(I)$ is submodular over $E$. Since $f$ exhibits geometric growth, parts of the $\tilde O(n^2)$ runtime analysis for discrete Newton's algorithm are tight \cite{discretenewton}.

Now consider the line search direction $d=[D,3D-1,d_3,\cdots,d_n]^\top$, where $D \gg d_i$. Consider the breakpoints of the concave envelope $g(\lambda)$ as defined in \ref{envelope}. Note that $f$ takes on minimum/second-minimum values $f(\{1\}) = 4$ and $f(\{1,2\})=16$ respectively, and thus the first two segments of this envelope are $4-\lambda D$ and $16-\lambda(4D-1)$. The first breakpoint therefore occurs at $\lambda = 12/(3D-1)$. In order for discrete Newton's method to converge in exactly one iteration, we need to initialize it before the first breakpoint occurs, i.e., choose $\lambda_0 \in [\lambda^\star , 12/(3D-1)]$. Since $\lambda^\star  = 4/D$, and $12/(3D-1) - 4/D\sim 4/(3D^2)$, the $1/\|d\|_1^2$ scaling is required for a non-trivial example.

\section{Applying Cutting Plane Methods}\label{sec:applycutingplane}
Let $B_p(x,r)$ denote the closed $L_p$-ball of radius $r$ centered at $x$. Define the minwidth of a (closed) set $\Omega\subset \R^n$ to be
$$\text{minwidth}(\Omega) \coloneqq \min_{a\in \R^n:\|a\|_2=1} \left( \max_{y\in \Omega} a^\top y - \min_{y \in \Omega} a^\top y\right).$$
With these definitions, we have the following theorem by Jiang et al \cite{jiang2020improvedcuttingplanemethod}.

\begin{theorem}[Cutting Plane Running Time \cite{jiang2020improvedcuttingplanemethod}] \label{runtime}
Let $F$ be a convex function on $\R^n$ with a subgradient oracle $\partial F$ with cost $\mathcal{T}$. Let $\Omega$ be a convex set containing a minimizer ($x^\star $) of $F$, with $\Omega\subset B_\infty(0,\mathcal{R})$. Using $B_\infty(0,\mathcal{R})$ as the initial polytope for the Cutting Plane Method, for any $\alpha\in(0,1)$ we can compute $x\in S$ such that $F(x)-F(x^\star )\leq \alpha (\max_{y \in \Omega} F(y) - F(x^\star ))$ with a running time of
\begin{align}O(\mathcal{T}\cdot n \log \kappa + n^3 \log \kappa), \quad 
\text{ where } \quad 
\kappa= \frac{n\mathcal{R}}{\alpha \cdot \textnormal{minwidth}(\Omega)}.
\end{align}
\end{theorem}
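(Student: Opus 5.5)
This statement is imported from Jiang et al.~\cite{jiang2020improvedcuttingplanemethod}. I would prove it by the standard reduction from approximate convex minimization to the feasibility version of their cutting plane method. That version says: given a separation oracle for an unknown convex set $K\subseteq B_\infty(0,\mathcal{R})$ that contains an $\ell_2$-ball of radius $\varepsilon$, a point of $K$ is found using $O(n\log(n\mathcal{R}/\varepsilon))$ oracle calls and $O(n^3\log(n\mathcal{R}/\varepsilon))$ additional time. The work is therefore in choosing a target set $K$ and a separation oracle so that these quantities become $\mathcal{T}$, $\kappa$ and $\alpha$ as in Theorem~\ref{runtime}.

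\textbf{Target set.} Let $F^\star=F(x^\star)$ and $\Delta=\max_{y\in\Omega}F(y)-F^\star$, and set
\[
K_\alpha \coloneqq x^\star+\alpha(\Omega-x^\star).
\]
Every $y=x^\star+\alpha(z-x^\star)$ with $z\in\Omega$ satisfies, by convexity,
\[
F(y)\le(1-\alpha)F^\star+\alpha F(z)\le F^\star+\alpha\Delta .
\]
So every point of $K_\alpha$ is $\alpha$-approximate. Moreover $K_\alpha\subseteq\Omega\subseteq B_\infty(0,\mathcal{R})$, because $\Omega$ is convex and contains $x^\star$.

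\textbf{Separation oracle and output.} At a query point $x$, the oracle computes a subgradient $v\in\partial F(x)$ at cost $\mathcal{T}$ and returns the halfspace $\{y: v^\top(y-x)\le 0\}$. By the subgradient inequality, every $y$ with $F(y)<F(x)$ lies in this halfspace. Hence if $x$ is not itself $\alpha$-approximate, the cut does not remove any point of $K_\alpha$ lying strictly below level $F(x)$. The algorithm keeps the best queried point.
\begin{itemize}
\item If some query is $\alpha$-approximate, we are done.
\item Otherwise no cut ever separates $K_\alpha$, up to a boundary/tie issue that is handled by shrinking $\alpha$ slightly. The cutting plane method then certifies that its current localizer contains no ball of the guaranteed radius, which contradicts the fact that $K_\alpha$ survives.
\end{itemize}
The only thing to watch is that the returned point should lie in $\Omega$. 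I would either query only points projected into $\Omega$, or add the cuts defining $\Omega$, which do not affect the stated bound.

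\textbf{Main obstacle: relating minwidth to an inscribed ball.} Jiang et al.'s guarantee is phrased in terms of an inscribed ball (or a volume lower bound), while the statement uses $\mathrm{minwidth}(\Omega)$. Scaling gives $\mathrm{minwidth}(K_\alpha)=\alpha\cdot\mathrm{minwidth}(\Omega)$. I would then invoke Steinhagen's theorem: a convex body in $\R^n$ of minimal width $w$ contains a ball of radius at least $w/(2\sqrt n)$. This gives
\[
\varepsilon\ \ge\ \frac{\alpha\cdot\mathrm{minwidth}(\Omega)}{2\sqrt n}.
\]
Plugging this in, $\log(n\mathcal{R}/\varepsilon)\le O\bigl(\log\frac{n\mathcal{R}}{\alpha\cdot\mathrm{minwidth}(\Omega)}\bigr)=O(\log\kappa)$, since the extra $\sqrt n$ is absorbed by the factor $n$ already in $\kappa$.

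The total cost is then $O(n\log\kappa)$ subgradient calls at cost $\mathcal{T}$ each, plus $O(n^3\log\kappa)$ overhead, as claimed. If Jiang et al.'s analysis is instead stated via a volume criterion, the same width-to-ball bound supplies the required volume lower bound $\mathrm{vol}(K_\alpha)\ge \mathrm{vol}\bigl(B_2(\cdot,\varepsilon)\bigr)$, and the argument is unchanged.
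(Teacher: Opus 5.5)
Your reduction is correct. The paper gives no proof of this theorem; it imports the statement from Jiang et al. The standard derivation behind it is the convex-minimization corollary in Lee, Sidford and Wong~\cite{lsw}, which Jiang et al.\ inherit with their improved overhead. That derivation uses exactly your ingredients: the homothetic copy $x^\star+\alpha(\Omega-x^\star)$, central subgradient cuts $\{y: v^\top(y-x)\le 0\}$, and output of the best queried point.

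You differ only in the last step. The standard argument closes with the cutting plane method's small-width certificate. If no query is $\alpha$-approximate, the final localizer contains $x^\star+\alpha(\Omega-x^\star)$, yet it is certified to have width below $\alpha\cdot\mathrm{minwidth}(\Omega)$, which is a contradiction. This is why the bound is phrased via minwidth in the first place. You instead use the inscribed-ball form of the guarantee and convert width to inradius via Steinhagen's theorem. That makes your argument independent of the exact form of the certificate, at the price of a $\mathrm{poly}(n)$ factor inside the logarithm. That factor is indeed absorbed, since $\mathrm{minwidth}(\Omega)\le 2\mathcal{R}$ and $\alpha<1$ give $\kappa> n/2$.

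Three small corrections:
\begin{enumerate}
\item Steinhagen's bound $r\ge w/(2\sqrt n)$ holds only for odd $n$. For even $n$ the sharp bound is $r\ge w\sqrt{n+2}/(2(n+1))$; for example, the equilateral triangle has $r=w/3<w/(2\sqrt 2)$. Use $r\ge w/(2\sqrt{n+1})$ instead, or the elementary $r\ge w/(2n)$ from John's ellipsoid. The $O(\log\kappa)$ conclusion is unchanged.
\item There is no tie issue, and no need to shrink $\alpha$. The subgradient inequality puts the whole sublevel set $\{y:F(y)\le F(x)\}$ in the closed halfspace. If $x$ is not $\alpha$-approximate, then $x^\star+\alpha(\Omega-x^\star)$ lies in that set. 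If $v=0$, then $x$ is a global minimizer and you can stop.
\item Since $x^\star$ is a global minimizer, the guarantee holds for the best queried point whether or not it lies in $\Omega$, so the ``$x\in S$'' in the statement can be read as $x\in\R^n$. Forcing $x\in\Omega$ would require a separation (or projection) oracle for $\Omega$, which is not among the hypotheses. Both of your suggested fixes presuppose exactly that access.
\end{enumerate}
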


To apply the cutting plane method, note that with the constraint $d^\top x=1$, we have an $n-1$ dimensional problem. Without loss of generality, assume $d_n >0$, and let $\Omega\coloneqq \{z\in \R_+^{n-1} : d^\top _{[n-1]}z \leq 1 \}$. Given $z\in \Omega$, let $\zeta_n(z) = d_n^{-1}(1-d_{[n-1]}^\top z) \geq 0$. Define $\phi(z) \coloneqq F(z\oplus \zeta_n(z))$. Minimizing $\phi$ over $\Omega$ using the cutting plane method solves the line search problem (see \eqref{eq:thmduality}). We now give runtime bounds for applying cutting plane methods to this problem.

\begin{theorem} {Given a submodular polytope defined by an integral submodular function $f \geq 0$}, with $\|f\|_\infty = M$ and Lov\'{a}sz extension $F(\cdot)$, and an integral line search direction {$d \in \mathbb{Z}^n$} with at least one positive component, using cutting plane methods the dual of the line search problem, {$\min_{\{x: d^\top x = 1, x\geq 0\}} F(x)$} can be solved in:
\begin{equation}
    O(\textnormal{EO}\cdot n^2 \log \kappa + n^3 \log \kappa) + O(1)\cdot \sfm
\end{equation}
runtime, with $\log \kappa = \log (nM\cdot \|d\|_1)$, where \textnormal{EO} is the cost of the evaluation oracle for $f$.    
\end{theorem}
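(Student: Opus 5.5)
We apply Theorem~\ref{runtime} to the reduced $(n-1)$-dimensional function $\phi(z)=F(z\oplus\zeta_n(z))$ over $\Omega=\{z\in\R^{n-1}_+: d_{[n-1]}^\top z\le 1\}$, with $d_n>0$ after relabeling. By Theorem~\ref{thm:linesearchdual}, $\min_\Omega\phi=\lambda^\star$. We run cutting planes to get an additive error below $\epsilon=1/\|d\|_1^2$, convert that to an upper estimate $\lambda_\epsilon\in[\lambda^\star,\lambda^\star+\epsilon)$, and finish with one discrete Newton step, which costs $O(1)$ \sfm\ calls by Section~\ref{sec:rounding}. The work is in bounding $\mathcal{T}$, $\mathcal{R}$, the range of $\phi$ on $\Omega$, and $\textnormal{minwidth}(\Omega)$, so that $\log\kappa=O(\log(nM\|d\|_1))$.

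\textbf{Oracle cost.} A subgradient of $\phi$ at $z$ is obtained by the chain rule. Take $v\in\partial F(x)$ with $x=z\oplus\zeta_n(z)$, computed by Edmonds' greedy algorithm in $O(n\,\eo+n\log n)$. Then $v_{[n-1]}-\frac{v_n}{d_n}d_{[n-1]}\in\partial\phi(z)$, since $\zeta_n$ is affine. So $\mathcal{T}=O(n\,\eo+n\log n)$, and $\mathcal{T}\cdot n\log\kappa$ gives the $n^2\log\kappa\cdot\eo$ term. The $n^2\log n\log\kappa$ part is absorbed into $n^3\log\kappa$.

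\textbf{Geometry.} Entries of $d$ with $d_i\le 0$ make $\Omega$ unbounded in those coordinates. I would handle this in one of two ways.
\begin{itemize}
\item Intersect with a box $x\le \rho$. An optimal $x^\star$ can be taken bounded, e.g.\ $\rho=\mathrm{poly}(n,M,\|d\|_1)$: $F(x)\ge$ something growing with large coordinates, because $F(x)\ge x_{\max}\cdot\min$ increments is not generally true. The fallback is to argue via vertices of the feasible polyhedron $\{x\ge 0,d^\top x=1\}$ intersected with the linearity regions of $F$. Their coordinates are rationals with denominators bounded by $\|d\|_1$-type determinants, so a box of radius $\mathcal{R}\le 1$ suffices when restricted to $\Omega'=\{z\ge 0,\ d_{[n-1]}^\top z\le 1,\ z\le \mathbf 1\}$. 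Indeed every vertex of $\{x\ge0, d^\top x=1\}$ is $e_i/d_i$ with $d_i>0$, and any minimizer of the piecewise-linear $F$ on this cone-section can be chosen among vertices of cells of the braid arrangement. Those cells have vertices of the form $\mathbf 1_S/d(S)$ with $d(S)>0$, so all coordinates are at most $1$. I expect this justification, that restricting to $[0,1]^n$ loses nothing, to be the main obstacle. It is consistent with the abstract's ``hyperplane intersected with the unit hypercube''.
\item Alternatively, restrict directly to that hypercube, so $\Omega=\{z\in[0,1]^{n-1}: 0\le d_{[n-1]}^\top z\le 1\}$ (the lower bound encoding $\zeta_n\le 1/d_n\le1$), and take $\mathcal{R}=1$.
\end{itemize}

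\textbf{Minwidth.} $\Omega$ contains the simplex with vertices $0$ and $e_i/\max(d_i,1)$ shifted appropriately. In particular it contains an $\ell_\infty$ ball of radius $\Omega(1/(n\|d\|_1))$, hence $\textnormal{minwidth}(\Omega)\ge 1/\mathrm{poly}(n,\|d\|_1)$.

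\textbf{Accuracy.} On $\Omega$, $|F(x)|\le nM\|x\|_\infty\le nM$, so $\max_\Omega\phi-\phi^\star\le 2nM$. Choosing $\alpha=\epsilon/(4nM)$ gives additive error below $\epsilon/2$. Then $\kappa=\mathrm{poly}(n,M,\|d\|_1)$ and $\log\kappa=O(\log(nM\|d\|_1))$.

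\textbf{Rounding.} The returned value $\lambda_\epsilon=\phi(z)$ satisfies $\lambda^\star\le\lambda_\epsilon<\lambda^\star+\epsilon$. We initialize discrete Newton at $\lambda_\epsilon$. By the ladder argument of Section~\ref{sec:rounding}, one \sfm\ call on $f-\lambda_\epsilon d$ returns a set $S$ with $\lambda_S=\lambda^\star$, and at most one more call certifies $g(\lambda^\star)=0$. Summing the costs gives the claimed bound.
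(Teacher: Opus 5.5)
Your argument goes through with your first geometric option, and it bounds the search region by a genuinely different route from the paper. The paper keeps $\Omega=\{z\in\R^{n-1}_+ : d_{[n-1]}^\top z\le 1\}$ and manufactures exactly the coercivity bound you abandoned. It perturbs $f$ to $f+\epsilon$ with $\epsilon=\|d\|_1^{-2}$, so that $F_\epsilon(x)\ge\epsilon\|x\|_\infty$ on the orthant. This confines a minimizer of the perturbed dual to $B_\infty(0,2M/\epsilon)$, and the paper then argues that the perturbed optimum is within $O(\epsilon)$ of $\lambda^\star$. The result is $\mathcal{R}=2M\|d\|_1^2$ and an objective range of order $M^2/\epsilon$.

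You instead use an exact structural fact. $F$ is linear on each cone $\{x_{\pi_1}\ge\cdots\ge x_{\pi_n}\ge 0\}=\mathrm{cone}(\mathbf 1_{S_1},\dots,\mathbf 1_{S_n})$. The slice of this cone by $d^\top x=1$ is a pointed polyhedron whose vertices are the points $\mathbf 1_{S_i}/d(S_i)$ with $d(S_i)>0$. $F\ge 0$ on this slice; this is where $f\ge 0$ is needed, and you should say so. So each slice's linear program attains its minimum at such a vertex. Hence $\mathbf 1_{S^\star}/d(S^\star)$ is an exact minimizer, and it lies in $[0,1]^n$ because $d(S^\star)\ge 1$ by integrality.

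This buys $\mathcal{R}=1$ with no perturbation, so there is no need to relate the perturbed and unperturbed optima. As a by-product it also proves Theorem~\ref{thm:linesearchdual} directly, since the minimum equals $\min_{d(S)>0}f(S)/d(S)=\lambda^\star$, without the lifting and penalty lemmas. Both routes give $\log\kappa=O(\log(nM\|d\|_1))$.

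Two corrections are needed.

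\emph{Discard the second option.} The constraint $0\le d_{[n-1]}^\top z$ encodes $\zeta_n\le 1/d_n$, not $\zeta_n\le 1$, and it can exclude every minimizer. Take $n=2$, $d=(-1,3)$, $f(\{1\})=1$, $f(\{2\})=3$, $f(\{1,2\})=1$. Then $\lambda^\star=1/2$ with unique dual minimizer $(1/2,1/2)$. But that set is $\{z\in[0,1]:0\le -z\le 1\}=\{0\}$, which also has zero minwidth. Your simplex $\mathrm{conv}\{0,e_i/\max(d_i,1)\}$ lies in the first $\Omega'$, not in this one.

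\emph{Fix the accuracy step.} On the first $\Omega'=\{0\le z\le\mathbf 1,\ d_{[n-1]}^\top z\le 1\}$, the last coordinate $\zeta_n(z)$ can be as large as $(1+\|d\|_1)/d_n$, so $\|x\|_\infty\le 1$ fails. Use instead $0\le F(x)\le M\|x\|_\infty\le M(1+\|d\|_1)$ on the orthant, and take $\alpha=\epsilon/(2M(1+\|d\|_1))$. Since $\Omega'\supseteq[0,1/\|d\|_1]^{n-1}$, you have $\textnormal{minwidth}(\Omega')\ge 1/\|d\|_1$, which yields $\kappa=O(nM\|d\|_1^4)$.

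The rest of your argument is fine: the chain-rule subgradient, and the single Newton step justified by the ladder spacing.
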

\begin{proof}
    We assume without loss of generality that $d_n>0$. First, we bound $\text{minwidth}(\Omega)$, where $\Omega\coloneqq \{z\in \R_+^{n-1} : d^\top _{[n-1]}z \leq 1 \}$. The vectors $\textbf{0}, \textbf{1}^{n-1}/\|d\|_1\in \Omega$, so the box $\{z:z\leq \textbf{1}^{n-1}/\|d\|_1\}\subset \Omega$. Hence, $\text{minwidth}(\Omega) \geq 1/\|d\|_1$. 

    Next, we show that we may restrict our search over $\Omega$ to an $L_\infty$-ball, by showing that an approximate minimizer can be found in a bounded domain. Consider a perturbation $f_{\epsilon} \coloneqq f + \epsilon$, for $\epsilon = 1/\|d\|_1^2$. It is submodular, though not integral any more (but integrality is not required for the cutting plane method). Consider an ordering $\pi$ of subsets $S \subseteq E$ in decreasing order of $\lambda_S = f(S)/d(S)$. The line search problem seeks to find the minimum $\lambda_S$, over subsets with $d(S)>0$. Note that the perturbed function $f_{\epsilon}$ induces the same ordering of subsets. Therefore, the optimal solution $\lambda_{\epsilon}^\star$ over $f_{\epsilon}$ is also $O(\epsilon)$-approximate for $\lambda^\star$, i.e, $\lambda_{\epsilon}^\star \leq \lambda^\star + O(\epsilon)$.   
 
    We next show that the minimizer of Lov\`{a}sz extension $F_\epsilon$ of $f_\epsilon$ over the first orthant must lie within $B_\infty(0, 2M/\epsilon)$, and therefore $B_\infty(0, \mathcal{R})$ may be used as the starting polytope for the cutting plane method, with $\mathcal{R} = 2M/\epsilon$.
    
    %Suppose that we perturb $f$ by some small positive amount $\delta$; setting $\delta = \epsilon/2$ with $\epsilon=\|d\|_1^{-2}$ {\color{red} these orderings are not defined. also changing $f$ be $\delta$ is very tricky, as this might break submodularity.} will not change the tight sets or the orderings of the $\lambda_S$. Thus, an $\epsilon/2$-approximate solution for the perturbed problem is an $\epsilon$-approximate solution for the original problem. We show that an optimizer to the perturbed problem has bounded $L_\infty$-norm $\|x^\star \|_\infty \leq \mathcal{R} \coloneqq M/\delta$.

%{\color{red} why don't I see the choice of $C$ here?} 

Consider the value of $F_\epsilon(x)$ at some $x$ with $\|x\|_\infty > 2M/\epsilon$:
$$F_\epsilon(x)=\|x\|_\infty\cdot F_\epsilon(\underbrace{x/\|x\|_\infty}_{\in [0,1]^n}) \overset{(a)}{>} (2M/\epsilon) \cdot \epsilon = 2M,$$
where (a) follows since $\epsilon$ is the minimum of the perturbed function $f_{\epsilon}$ over the unit hypercube. If such an $x$ were optimal, this would imply $\lambda_{\epsilon}^\star > 2M$. But since $\lambda^\star \leq M$ for the original line search problem, this is impossible (since $M \in \Omega(\epsilon)$).

%Since we must have $\lambda^\star \leq M$, such an $x$ cannot be optimal for the perturbed problem. Thus, we need only optimize over $x$ with bounded $L_\infty$ norm.

We will now consider the cutting plane method over $\Omega' = \Omega \cap B_\infty(0, \mathcal{R})$. Compute
$$\max_{z\in\Omega'} \phi(z) -\phi(x^\star ) \leq \max_{z\in\Omega'} \|z\|_\infty \cdot M = 2M^2/\epsilon,$$
so we can set $\alpha = \epsilon/M^2$ as needed in Theorem \ref{runtime}. Plugging $\mathcal{R}=2M/\epsilon$, $\text{minwidth}(\Omega)\geq \|d\|_1^{-1}$, and $\alpha = \epsilon/(2M^2)$ into \ref{runtime}, we require
$$\kappa \asymp \frac{nM^3\cdot \|d\|_1}{\epsilon ^3},$$
to achieve $O(\epsilon)$ error in $O(\mathcal{T}\cdot n \log \kappa + n^3 \log \kappa)$ runtime. After the approximation using the cutting plane method, one can round the solutions to the exact optimum $\lambda^\star$ only $O(1)$ $\sfm$. Finally, the subgradient of $\phi$ is computed using the Edmond's greedy algorithm for the Lov\'{a}sz extension, which requires $n\cdot \eo + O(n \log n)$ runtime. This completes the proof.
\end{proof}

\section*{Acknowledgments}
The authors would like to thank Jai Moondra for discussions about rounding to the exact intersection.

\bibliographystyle{siamplain}
\bibliography{references}

%%%%%%%%%%%%%%%%%
\end{document}